\font\bbbld=msbm10 scaled\magstephalf
\newcommand{\bpartial}{\bar{\partial}}
\def \a{\alpha}
\def \p{\partial}
\newcommand{\bfR}{\hbox{\bbbld R}}
\newtheorem{theorem}{Theorem}[section]
\newtheorem{lemma}[theorem]{Lemma}
\newtheorem{corollary}[theorem]{Corollary}
 \theoremstyle{definition}
\theoremstyle{remark}
\newtheorem{remark}[theorem]{Remark}
\numberwithin{equation}{section}
\begin{document}
\setlength{\baselineskip}{1.2\baselineskip}

\title[Fully Nonlinear Elliptic Equations: $L^\infty$ estimate]
{On a class of fully nonlinear elliptic equations
    on closed Hermitian manifolds II: $L^\infty$ estimate}

\author{Wei Sun}

\address{Department of Mathematics, Ohio State University,
         Columbus, OH 43210}
\email{sun.220@osu.edu}

\begin{abstract}
We study a class of fully nonlinear elliptic equations on closed Hermitian manifolds. Under the assumption of cone condition, we derive the $L^\infty$ estimate directly.
\end{abstract}

\maketitle

\section{Introduction}
\label{ma2-int}
\setcounter{equation}{0}
\medskip

Let $(M, \omega)$ be a compact Hermitian manifold of complex dimension $n\geq 2$ and $\chi$ a smooth real $(1,1)$ form on $M$. For convenience, we shall write
\begin{equation*}
	\omega = \sqrt{-1} \sum_{i,j} g_{i\bar j} dz^i \wedge d\bar z^j
\end{equation*}
and
\begin{equation*}
	\chi = \sqrt{-1} \sum_{i,j} \chi_{i\bar j} dz^i \wedge d\bar z^j
\end{equation*}
respectively in any local coordinate chart.

Throughout this paper we will use the shorthand $\chi_u = \chi + \sqrt{-1} \p\bpartial u$. We are concerned with the following two types of elliptic equations:
\begin{enumerate}
\item {\em The complex $k$-Hessian equation.} For $2 \leq k \leq n$ and $\chi \in \Gamma^k_\omega$,
\begin{equation}
\label{hessian-equation}
	\chi^k_u \wedge \omega^{n - k} = \psi \omega^n ,\qquad \text{with } \chi_u \in \Gamma^k_\omega. 
\end{equation}

\item {\em The complex $(k,l)$-quotient equation.} For $ 1 \leq l < k \leq n$ and $\chi \in \Gamma^k_\omega$,
\begin{equation}
\label{quotient-equation}
	\chi^k_u \wedge \omega^{n - k} = \psi \chi^l_u \wedge \omega^{n - l} ,\qquad \text{with } \chi_u \in \Gamma^k_\omega.
\end{equation}
Following \cite{SW08}, \cite{FLM11} and \cite{GSun12}, we define for $\psi \in C^0(M)$, $\psi > 0$
\begin{equation}
\label{cone-condition}
	\mathscr{C}_{k,l} (\psi) := \{[\chi]\, : \exists \chi' \in \Gamma^k_\omega \cap [\chi],\, k \chi'^{k - 1} \wedge \omega^{n - k} > l \psi \chi'^{l - 1} \wedge \omega^{n - l}\} .
\end{equation}
If $[\chi] \in \mathscr{C}_{k,l} (\psi)$, we say that $\chi$ satisfies the cone condition for equation~\eqref{quotient-equation} with respect to $\psi$.

\end{enumerate}
Here $\psi$ is a smooth positive function on $M$, and $\Gamma^k_\omega$ is the set of all the real $(1, 1)$ forms whose eigenvalue set with respect to $\omega$ belong to $k$-positive cone in $\bfR^n$.

These equations include some of the most important partial differential equations in complex geometry and analysis.  The $n$-Hessian equation corresponds to the complex Monge-Amp\`ere equations which plays central roles in K\"ahler geometry as well as problems outside K\"ahler geometry since the famous work of Yau~\cite{Yau78} (see also Aubin~\cite{Aubin78}), while the $(n,n-1)$-quotient equations appears in a problem proposed by Donaldson~\cite{Donaldson99a} in the setting of moment maps and another by Chen~\cite{Chen00b} in  the study of Mabuchi energy.

The study in this paper reveals the key role of the cone condition in complex geometric equations on closed manifolds. Similar to the subsolution condition for the Dirichlet problem, the cone condition is very likely to help us to remove some geometric assumptions, e.g. positive curvatures. Unlike the subsolution for the Dirichlet problem,  in previous works the cone condition  can  be essentially used to discover $C^2$ estimate only. To the best of the author's knowledge, this is the first time to derive such $L^\infty$ bound directly from the cone condition. Indeed, the $L^\infty$ bound is often the most difficult part in solving elliptic or parabolic problems on closed manifolds.

The following theorem states the result on closed K\"ahler manifolds.
\begin{theorem}
\label{main-theorem}
Let $(M,g)$ be a closed K\"ahler manifold of complex dimension $n \geq 2$ and $\chi$ a smooth closed real $(1,1)$ form. Assume that $u\in C^2(M)$ satisfies either \eqref{hessian-equation} or \eqref{quotient-equation}. Then there is a uniform $C^0$ a priori estimate for $u$ depending only on $(M, \omega)$, $\chi$ and $\psi$.
\end{theorem}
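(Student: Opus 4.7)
My plan is to adapt the direct auxiliary Monge--Amp\`ere technique of Guo--Phong--Tong so that the positivity afforded by the cone condition plays the role of a subsolution. Normalize $\sup_M u = 0$; it then suffices to bound $L := \sup_M(-u)$. In the quotient case, the cone condition supplies a reference $\chi' = \chi + \sqrt{-1}\,\p\bpartial \underline u \in \Gamma^k_\omega \cap [\chi]$ for which
\[
\eta := k\,\chi'^{\,k-1}\wedge\omega^{n-k} - l\psi\,\chi'^{\,l-1}\wedge\omega^{n-l}
\]
is a strictly positive $(n-1,n-1)$-form, hence $\eta \geq c_0\,\omega^{n-1}$ uniformly for some $c_0 > 0$. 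Replacing $\chi$ by $\chi'$ and $u$ by $u - \underline u$ leaves the equation invariant, so we may assume $\eta \geq c_0\,\omega^{n-1}$ at $\chi$ itself; for the $k$-Hessian case, the analogous gap $k\chi'^{\,k-1}\wedge\omega^{n-k} > 0$ is automatic on $\Gamma^k_\omega$.

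For each $s \in (0,L)$, let $\Omega_s := \{u \leq -s\}$ and solve, via Yau's theorem, the auxiliary complex Monge--Amp\`ere equation
\[
(\omega + \sqrt{-1}\,\p\bpartial \varphi_s)^n = \frac{\int_M \omega^n}{\mathrm{Vol}(\Omega_s) + \varepsilon}\bigl(\mathbf{1}_{\Omega_s} + \varepsilon\bigr)\,\omega^n, \qquad \sup_M \varphi_s = 0,
\]
letting $\varepsilon \downarrow 0$ at the end. The sharp $L^\infty$ estimate for the Monge--Amp\`ere equation gives $\|\varphi_s\|_{L^\infty} \leq \Psi\bigl(\mathrm{Vol}(\Omega_s)\bigr)$ with modulus $\Psi(t) \to 0$ as $t \to 0$.

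Form $w_s := (-u - s - \Lambda\varphi_s)_+$ with $\Lambda$ to be chosen large, pair both sides of \eqref{hessian-equation} or \eqref{quotient-equation} against $w_s$, and apply Stokes' theorem repeatedly (valid because $d\omega = 0$ in the K\"ahler case). The mixed $(n-1,n-1)$-wedge products that appear can be controlled pointwise by the G\r{a}rding inequality for forms in $\Gamma^k_\omega$, reducing the integrand's leading term to a multiple of $w_s \cdot \eta$. After absorbing small errors proportional to $\Lambda\,\Psi(\mathrm{Vol}(\Omega_s))$, I expect an estimate of the form
\[
c_0 \int_M w_s\,\omega^n \leq C\,\Lambda\,\Psi(\mathrm{Vol}(\Omega_s))\,\mathrm{Vol}(\Omega_s)^{1 + \delta}
\]
for some $\delta > 0$. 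Combined with $w_s \geq (s'-s) - \Lambda\Psi(\mathrm{Vol}(\Omega_s))$ on $\Omega_{s'}$ for $s' > s$, this converts into a decay inequality for $\phi(s) := \mathrm{Vol}(\Omega_s)$, and the standard De Giorgi iteration lemma then forces $L \leq C$.

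The main obstacle will be the integration-by-parts step: the cone condition is a positivity statement for $(n-1,n-1)$-forms rather than scalars, so extracting a genuine scalar gain from it requires pairing $\eta$ with $\sqrt{-1}\,\p\bpartial w_s$ through several applications of Stokes' theorem while controlling the cross-terms that mix powers of $\chi_u$ with powers of $\chi$. The G\r{a}rding inequality is the right tool but must be applied so that the quotient term $\psi\,\chi_u^{\,l}\wedge\omega^{n-l}$ is absorbed into $\chi_u^{\,k}\wedge\omega^{n-k}$ rather than the other way around; the strict positivity of $\eta$, rather than mere non-negativity, is exactly what makes this absorption possible and is why the cone condition is essential.
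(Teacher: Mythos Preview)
Your plan takes a genuinely different route from the paper. The paper does \emph{not} use an auxiliary Monge--Amp\`ere equation or De Giorgi iteration; instead it runs the classical Moser iteration of Yau/Tosatti--Weinkove, reducing everything to the single inequality
\[
\int_M \bigl|\p e^{-\frac{p}{2}u}\bigr|^2_g\,\omega^n \;\leq\; Cp\int_M e^{-pu}\,\omega^n
\qquad (p\geq p_0).
\]
This is obtained by integrating $e^{-pu}$ against the difference $(\chi_u^k-\chi^k)\wedge\omega^{n-k}$ (respectively $(\chi_u^k-\chi^k)\wedge\omega^{n-k}-\psi(\chi_u^l-\chi^l)\wedge\omega^{n-l}$), writing that difference via the interpolation $\chi_{tu}=\chi+t\sqrt{-1}\,\p\bpartial u$ as $\int_0^1 \sqrt{-1}\,\p\bpartial u\wedge(\cdots)_{tu}\,dt$, and integrating by parts once. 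Concavity of $S_{k-1;i}^{1/(k-1)}$ and of $S_k/S_l$ along the segment $t\mapsto\chi_{tu}$, together with G\r{a}rding's inequality, is what converts the cone condition (a positivity statement at the endpoints $\chi$ and $\chi_u$) into a uniform lower bound on $k\chi_{tu}^{k-1}\wedge\omega^{n-k}-l\psi\chi_{tu}^{l-1}\wedge\omega^{n-l}$ for all $t\in[0,1]$. That interpolation is the device you are missing: it is precisely how the paper passes from positivity of $\eta$ at $\chi$ to positivity of the analogous form at the mixed powers that actually appear after Stokes.

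Your sketch has a real gap at the step you yourself flag. Pairing the equation against $w_s$ and integrating by parts produces integrands involving $\chi_u^{\,j}\wedge\chi^{\,k-1-j}$ for all $0\le j\le k-1$, not just $\chi^{k-1}$; the cone form $\eta$ lives only at $j=0$, and you give no mechanism (analogous to the paper's $t$-interpolation plus concavity) to propagate its strict positivity to the mixed terms. Moreover, the displayed decay inequality $c_0\int_M w_s\,\omega^n \le C\Lambda\,\Psi(\mathrm{Vol}(\Omega_s))\,\mathrm{Vol}(\Omega_s)^{1+\delta}$ is asserted without source for the extra $\mathrm{Vol}(\Omega_s)^{\delta}$; in the genuine Guo--Phong--Tong argument that gain comes from a \emph{pointwise comparison} (maximum principle) between $-u$ and a power of the auxiliary potential $-\varphi_s$, not from an energy/Stokes computation, and it is unclear how your integration-by-parts route would manufacture it. If you want to pursue the auxiliary-equation strategy, the cleaner path is to imitate the comparison-principle versions of Guo--Phong--Tong for $\sigma_k$ and $\sigma_k/\sigma_l$ directly, where the cone condition enters as the existence of a $\mathcal C$-subsolution in the sense of Sz\'ekelyhidi; the energy hybrid you outline does not obviously close.
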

\begin{remark}
	It is easy to see that the complex $k$-Hessian equation can be treated as a particular case of the complex quotient equations when $l = 0$.  However, the complex Hessian equations have a natural strong cone condition, that is $\mathscr{C}_{k,0} (\psi) = \{[\chi]\, :\, \Gamma^k_\omega \cap [\chi] \neq \emptyset\}$. Furthermore, $[\chi] \in \mathscr{C}_{k,0} (\psi)$ for any smooth positive function $\psi$.
\end{remark}

On Hermitian manifolds, the equations are much more difficult to treat due to the torsion terms. In this paper we shall only study the complex Monge-Amp\`ere type equations on closed Hermitian manifolds, which is exactly the $(n, n - \a)$-quotient equations 
\begin{equation}
\label{ma2-main-equation}
	\chi^n_u = \psi \chi^{n - \alpha}_u \wedge \omega^\alpha, \qquad \text{with } \chi_u > 0
\end{equation}
where $\psi$ is a smooth positive function  and $1 \leq \alpha \leq n$.

Our main result on Hermitian manifolds is the following  {\em a priori} estimates.
\begin{theorem}
\label{ma2-theorem-estimate}
Let $(M,\omega)$ be a closed Hermitian manifold of complex dimension $n \geq 2$ and $u$ be a smooth admissible solution to equaion \eqref{ma2-main-equation}. Suppose that $[\chi] \in \mathscr{C}_{n, n -\a} (\psi)$. Then there are uniform $C^\infty$ a priori estimates for $u$.
\end{theorem}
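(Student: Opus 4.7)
The plan is to reduce Theorem~\ref{ma2-theorem-estimate} to the $L^\infty$ estimate, since a $C^2$ bound for equation~\eqref{ma2-main-equation} under the cone condition $[\chi]\in\mathscr{C}_{n,n-\a}(\psi)$ is already available from the author's companion Part~I, and higher regularity follows by complex Evans--Krylov together with Schauder bootstrap once $C^0$ and $C^2$ are known. The heart of the proof is therefore to bound $\|u\|_{L^\infty}$ for an admissible solution $u$ directly from the cone condition, which is what the title of the paper advertises as new.

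Normalize $\sup_M u = 0$, and use $[\chi]\in\mathscr{C}_{n,n-\a}(\psi)$ to fix $\chi' = \chi + \sqrt{-1}\,\p\bpartial v \in \Gamma^n_\omega\cap[\chi]$ with
\[
  n\,\chi'^{\,n-1} \;>\; (n-\a)\,\psi\,\chi'^{\,n-\a-1}\wedge\omega^\a
\]
as strictly positive $(n-1,n-1)$-forms. Writing $\tilde u = u - v$, equation~\eqref{ma2-main-equation} becomes one with strictly positive reference $\chi'$, so it suffices to control $-\inf_M \tilde u$. My plan is to adapt the auxiliary complex Monge--Amp\`ere approach of Guo--Phong--Tong to the Hermitian setting: for each $s>0$ such that $E_s := \{\tilde u < -s\}$ is nonempty, solve
\[
  (\chi' + \sqrt{-1}\,\p\bpartial\varphi_s)^n \;=\; c_s\,\mathbf{1}_{E_s}\,\omega^n
\]
(or a balanced Hermitian analogue) with $\sup_M\varphi_s = 0$ and $c_s$ chosen so that the total mass matches that of $\chi'^{\,n}$. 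Ko\l odziej / Tosatti--Weinkove / Dinew--Ko\l odziej estimates then yield $\|\varphi_s\|_{L^\infty}\le C$ independently of $s$. Testing $\varphi_s$ against equation~\eqref{ma2-main-equation} and integrating by parts should produce an inequality of the form $s\,\mathrm{Vol}(E_s) \le C\,\mathrm{Vol}(E_s)^{1+\d}$ for some $\d>0$, which by De Giorgi iteration forces $s$ to stay uniformly bounded.

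The main obstacle will be the Hermitian torsion: on a non-K\"ahler $(M,\omega)$ we have $d\omega\ne 0$ and $d\chi'\ne 0$, so every integration by parts against $\varphi_s$ generates error terms controlled by $|\p\tilde u|_{\chi'}\,|\p\varphi_s|_{\chi'}$ and their Cauchy--Schwarz companions. The strict inequality in~\eqref{cone-condition} must supply a uniformly positive $(n-1,n-1)$-gap
\[
  n\,\chi'^{\,n-1} - (n-\a)\,\psi\,\chi'^{\,n-\a-1}\wedge\omega^\a \;\ge\; \eta\,\omega^{n-1}
\]
large enough to absorb these errors after the splitting $ab\le \e a^2 + \e^{-1}b^2$ with $\e$ chosen small relative to $\eta$. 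This is precisely where the cone condition does essential work at the $C^0$ level, in contrast to its usual role at the $C^2$ stage emphasized in the introduction. Once this $L^\infty$ bound is in hand, combining it with the $C^2$ estimate from Part~I and the Evans--Krylov--Schauder theory in the Hermitian complex setting closes the proof of $C^\infty$ regularity.
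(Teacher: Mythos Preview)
Your reduction of the theorem to an $L^\infty$ bound is exactly what the paper does: it quotes the gradient and $C^2$ estimates from \cite{GSun12} (sharpened in \cite{Sun2013e}) under the cone hypothesis, and then invokes Evans--Krylov and Schauder for higher regularity. So the overall architecture of your proposal matches.

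The $L^\infty$ argument, however, is genuinely different. The paper does \emph{not} use an auxiliary Monge--Amp\`ere equation or De~Giorgi iteration. Instead it proves the Tosatti--Weinkove/Moser-type inequality
\[
  \int_M \bigl|\p e^{-\frac{p}{2}u}\bigr|^2_g\,\omega^n \;\le\; Cp\int_M e^{-pu}\,\omega^n \qquad (p\ge p_0)
\]
directly from the cone condition. The mechanism is to write
\[
  (\chi_u^n-\chi^n)-\psi\bigl(\chi_u^{n-\a}\wedge\omega^\a-\chi^{n-\a}\wedge\omega^\a\bigr)
  \;=\;\int_0^1 \sqrt{-1}\,\p\bpartial u\wedge\bigl(n\chi_{tu}^{\,n-1}-(n-\a)\psi\,\chi_{tu}^{\,n-\a-1}\wedge\omega^\a\bigr)\,dt,
\]
multiply by $e^{-pu}$, and integrate by parts twice. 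Concavity of $S_n/S_{n-\a}$ propagates the strict cone inequality from $\chi$ and $\chi_u$ to every $\chi_{tu}$, which supplies a uniformly positive $(n-1,n-1)$ form against which $\sqrt{-1}\,\p u\wedge\bpartial u$ is paired. The Hermitian torsion terms produced by the second integration by parts are absorbed by this same positivity together with elementary G\r{a}rding/Maclaurin inequalities comparing $\chi_{tu}^k\wedge\omega^{n-k}$ for adjacent $k$. The cone condition thus enters the $C^0$ estimate in a completely explicit, pointwise way, without any auxiliary PDE.

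Your Guo--Phong--Tong route is a legitimate modern alternative in spirit, but as written it has real gaps. First, with $c_s$ chosen to normalize mass one has $c_s\sim\mathrm{Vol}(E_s)^{-1}$, so $\|c_s\mathbf{1}_{E_s}\|_{L^p}\sim\mathrm{Vol}(E_s)^{1/p-1}\to\infty$; the claim ``$\|\varphi_s\|_{L^\infty}\le C$ independently of $s$'' does not follow from Ko\l odziej-type estimates, and the actual GPT comparison is more delicate than what you sketch. Second, ``testing $\varphi_s$ against equation~\eqref{ma2-main-equation}'' is not a well-defined step for a quotient equation: unlike the Hessian case, one does not have an upper bound $\chi_u^n\le C\omega^n$ (Maclaurin goes the wrong way here), so there is no obvious Monge--Amp\`ere measure to compare against. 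You would need to specify precisely which functional of $\chi_u$ the auxiliary solution controls and why. Third, on a Hermitian manifold $\int_M\chi'^{\,n}$ is not a cohomological invariant, so even the normalization of the auxiliary equation and the integration-by-parts identities require the kind of torsion bookkeeping that you defer. None of this is fatal to the strategy, but the proposal does not yet contain the ideas needed to close it, whereas the paper's exponential-weight argument is self-contained and elementary.
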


\begin{remark}
By the proofs, it is straightforward to verify the uniqueness, up to a constant, of the solutions to equations~\eqref{hessian-equation},~\eqref{quotient-equation} on closed K\"ahler manifolds and equation~\eqref{ma2-main-equation} on closed Hermitian manifolds.
\end{remark}

One significant application is the regularity and existence of the solution to equation~\eqref{ma2-main-equation} after rescaling on closed Hermitian manifolds.

\begin{corollary}
\label{ma2-theorem-solution}
Let $(M^n,\omega)$ be a closed Hermitian manifold of complex dimension $n$ and $\chi$ a smooth Hermitian metric on $M^n$. Suppose that $\chi$ satisfies the cone condition with respect to $\psi$. Then  there exists a unique solution to equation \eqref{ma2-main-equation} up to a constant multiple if one of the following conditions holds true:
\begin{enumerate}

\item $\a = n$ ;

\item $\chi$ and $\omega$ satisfy
	\begin{equation*}
	\label{ma2-theorem-hermitian-condition}
    		\frac{\chi^n}{\chi^{n - \a} \wedge \omega^\a}\leq \psi ; 
	\end{equation*}

\item $\chi$ and $\omega$ are both K\"ahler, and $\psi \geq c$ where
	\begin{equation*}
		\label{ma2-kahler-constant}
		c = \frac{\int_M \chi^n}{\int_M \chi^{n - \a} \wedge \omega^\a}.
	\end{equation*}

\end{enumerate}	
\end{corollary}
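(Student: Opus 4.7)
The strategy is the method of continuity, with Theorem~\ref{ma2-theorem-estimate} providing the crucial $L^\infty$ input. In each case I would introduce a one-parameter family
\begin{equation*}
	\chi_{u_t}^n = b_t\,\psi_t\,\chi_{u_t}^{n-\alpha}\wedge\omega^\alpha,\qquad t\in[0,1],
\end{equation*}
with a trivially solvable starting point at $t=0$ and the target equation~\eqref{ma2-main-equation} at $t=1$, and then show that the set $T\subset[0,1]$ of parameters for which a smooth admissible pair $(u_t,b_t)$ exists is non-empty, open, and closed. The ``constant multiple'' in the statement corresponds to $b_t$, while $u_t$ is uniquely determined modulo an additive constant.

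For openness I would apply the implicit function theorem in H\"older spaces. The linearization of the quotient operator at $u_t$ is a second-order elliptic operator whose only obstruction on a closed manifold is the one-dimensional space of constants; this is absorbed by simultaneously allowing $b_t$ to vary. Closedness rests on the a priori estimates: the $L^\infty$ bound from Theorem~\ref{ma2-theorem-estimate} (valid as long as the cone condition is preserved along the path), together with the $C^2$ estimates of Part~I of this series, Evans--Krylov, and Schauder bootstrapping.

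The three cases differ only in the initial data and the deformation. Case (1), $\alpha=n$, reduces \eqref{ma2-main-equation} to the complex Monge--Amp\`ere equation $\chi_u^n=b\psi\omega^n$, which is solvable on closed Hermitian manifolds by Tosatti--Weinkove. In case (2), the pointwise hypothesis means that $u_0\equiv 0$ solves the equation with $\psi$ replaced by $\psi_0:=\chi^n/(\chi^{n-\alpha}\wedge\omega^\alpha)\leq\psi$; I would interpolate linearly $\psi_t=(1-t)\psi_0+t\psi$, noting that since $\psi_t\leq\psi$ the cone condition for $\psi$ implies the cone condition for $\psi_t$. In case (3), the K\"ahler hypothesis supplies the Stokes identities $\int_M\chi_u^n=\int_M\chi^n$ and $\int_M\chi_u^{n-\alpha}\wedge\omega^\alpha=\int_M\chi^{n-\alpha}\wedge\omega^\alpha$, which identify $c$ as the integral average ratio; starting from the constant-right-hand-side problem $\chi_{u_0}^n=c\,\chi_{u_0}^{n-\alpha}\wedge\omega^\alpha$ (solvable by previous work on quotient equations on closed K\"ahler manifolds) and interpolating $\psi_t=(1-t)c+t\psi\geq c$, the cone condition is propagated. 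Uniqueness in every case follows from a standard maximum-principle argument applied to the difference of two solutions at its extrema.

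The main obstacle I foresee is verifying that $[\chi]\in\mathscr{C}_{n,n-\alpha}(\psi_t)$ holds for every $t\in[0,1]$, so that Theorem~\ref{ma2-theorem-estimate} furnishes a uniform bound along the full deformation. In case (3) the propagation is particularly delicate: it depends on the precise interplay between the K\"ahler assumption, the integral constraint, and the lower bound $\psi\geq c$. A secondary difficulty is organizing the openness step so that both $u_t$ and the constant $b_t$ evolve continuously, which amounts to checking that the natural projection from the space of admissible pairs $(u,b)$ to the parameter $\psi$ is a submersion in a neighborhood of the path.
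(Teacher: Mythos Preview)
The paper does not supply a separate proof of this corollary; it is stated as an immediate application of the a~priori estimates in Theorem~\ref{ma2-theorem-estimate} together with the remark that case~(1) ``can be treated as a particular case of the second but with a strong cone condition.'' Your continuity-method outline is exactly the standard mechanism for converting those estimates into existence, and your verification that $\psi_t\leq\psi$ along each interpolation preserves the cone condition $[\chi]\in\mathscr{C}_{n,n-\alpha}(\psi_t)$ (so that Theorem~\ref{ma2-theorem-estimate} applies uniformly in $t$) is the key point the paper leaves unspoken. Your treatment of case~(3), invoking \cite{SW08}, \cite{FLM11} for the constant-$c$ starting problem in the K\"ahler setting, and of case~(1) via \cite{TWv10a}, \cite{TWv10b}, matches the references the paper cites, so your proposal and the paper's implicit argument coincide.
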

\begin{remark}
The first case was achieved by Tosatti and Weinkove~\cite{TWv10a},~\cite{TWv10b}, and can be treated as a particular case of the second but with a strong cone condition.
\end{remark}

\bigskip

\section{The estimates in K\"ahler geometry}
\label{kahler}
\setcounter{equation}{0}
\medskip

According to Tosatti and Weinkove~\cite{TWv10a},~\cite{TWv10b}, it suffices to show 
\begin{equation}
\label{ma2_2_main_inequality}
	\int_M |\p e^{- \frac{p}{2} u}|^2_g \omega^n \leq C p\int_M e^{- p u} \omega^n 
\end{equation}
for $p$ large enough. We refer the readers to \cite{TWv10a},~\cite{TWv10b} and~\cite{Yau78} for more details.

\begin{lemma} 
\label{hessian-lemma-estimate}
Let $u$ be a smooth admissible solution of $k$-Hessian equation~\eqref{hessian-equation}. Then there are uniform constants $C$, $p_0$ such that for all $p \geq p_0$, inequality~\eqref{ma2_2_main_inequality} holds true.
\end{lemma}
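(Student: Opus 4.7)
The plan is to extract inequality~\eqref{ma2_2_main_inequality} from the equation directly by one well-chosen integration by parts. After normalizing $u$ by an additive constant so that $\sup_M u = 0$ (both sides of \eqref{ma2_2_main_inequality} are invariant under such a shift), the strategy is to test the equation against a closed $(n-1,n-1)$-form that on one side collapses to the Hessian equation and on the other dominates a positive multiple of $\omega^{n-1}$.

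Concretely, I would set
\[
T \;:=\; \sum_{j=0}^{k-1}\chi_u^{\,j}\wedge\chi^{k-1-j}\wedge\omega^{n-k},
\]
which is closed on the K\"ahler manifold $(M,\omega)$ since each factor is closed, and which satisfies the telescoping identity $(\chi_u-\chi)\wedge T = (\chi_u^k-\chi^k)\wedge\omega^{n-k}$. Combined with \eqref{hessian-equation} this gives the clean identity $\sqrt{-1}\,\partial\bar\partial u\wedge T = \psi\,\omega^n - \chi^k\wedge\omega^{n-k}$. Stokes' theorem applied to $d\bigl(e^{-pu}\,\partial u\wedge T\bigr)$ on the closed manifold, together with $dT=0$, then yields
\[
p\int_M e^{-pu}\,\sqrt{-1}\,\partial u\wedge\bar\partial u\wedge T
\;=\; \int_M e^{-pu}\bigl(\psi\,\omega^n - \chi^k\wedge\omega^{n-k}\bigr) .
\]
The right-hand side is bounded above by $\|\psi\|_{L^\infty}\int_M e^{-pu}\omega^n$: the subtracted term $\int_M e^{-pu}\chi^k\wedge\omega^{n-k}$ is non-negative because $\chi\in\Gamma^k_\omega$ makes $\chi^k\wedge\omega^{n-k} = c_{n,k}\sigma_k(\lambda_\chi)\,\omega^n$ a positive top form.

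The only nontrivial step, and the main obstacle, is the uniform weak positivity $T \geq c_0\,\omega^{n-1}$ for some $c_0>0$ depending only on $(M,\omega,\chi)$. Given such a bound, the pointwise identity $\sqrt{-1}\,\partial u\wedge\bar\partial u\wedge\omega^{n-1} = \tfrac{1}{n}|\partial u|_g^2\,\omega^n$ combined with $|\partial e^{-pu/2}|_g^2 = \tfrac{p^2}{4}e^{-pu}|\partial u|_g^2$ immediately converts the above identity into \eqref{ma2_2_main_inequality}. To prove the positivity, the $j=0$ summand $\chi^{k-1}\wedge\omega^{n-k}$ is the dominant one: simultaneously diagonalizing $\omega$ and $\chi$ with eigenvalues $\lambda_\chi$, one computes
\[
\sqrt{-1}\,\alpha\wedge\bar\alpha\wedge\chi^{k-1}\wedge\omega^{n-k}
\;=\; c_{n,k}\Bigl(\sum_m |\alpha_m|^2\,\sigma_{k-1}(\lambda_\chi;m)\Bigr)\omega^n
\]
for every $(1,0)$-form $\alpha=\sum_m\alpha_m\,dz^m$, and $\chi\in\Gamma^k_\omega$ forces $\sigma_{k-1}(\lambda_\chi;m)>0$ at every point for every deleted index $m$ (a standard property of G\aa{}rding cones); compactness of $M$ and smoothness of $\chi$ then yield the uniform lower bound $\chi^{k-1}\wedge\omega^{n-k}\geq c_0\,\omega^{n-1}$. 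For the remaining summands $\chi_u^{\,j}\wedge\chi^{k-1-j}\wedge\omega^{n-k}$ with $j\geq 1$, pairing with $\sqrt{-1}\,\alpha\wedge\bar\alpha$ produces a polarized $\sigma_k$ evaluated at $k$ Hermitian matrices all lying in $\overline{\Gamma^k_\omega}$ (the rank-one form $\sqrt{-1}\,\alpha\wedge\bar\alpha$ is on the boundary), so G\aa{}rding's inequality delivers non-negativity. Adding up, $T\geq\chi^{k-1}\wedge\omega^{n-k}\geq c_0\,\omega^{n-1}$ weakly, and \eqref{ma2_2_main_inequality} follows with $C=Cn/(4c_0)$ for any $p\geq p_0$ (indeed any $p_0>0$ is admissible; the threshold will matter only at the subsequent Moser iteration stage).
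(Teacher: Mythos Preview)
Your proof is correct and follows essentially the same approach as the paper: your telescoping form $T=\sum_{j=0}^{k-1}\chi_u^{\,j}\wedge\chi^{k-1-j}\wedge\omega^{n-k}$ is literally equal to the paper's $k\int_0^1\chi_{tu}^{k-1}\wedge\omega^{n-k}\,dt$ (expand $(t\chi_u+(1-t)\chi)^{k-1}$ binomially and integrate term by term via the Beta identity), and your G\aa rding inequality on the $j\ge 1$ summands plays the same role as the paper's concavity bound $S_{k-1;i}(\chi_{tu})\ge(1-t)^{k-1}S_{k-1;i}(\chi)$, both yielding the weak lower bound $T\ge\chi^{k-1}\wedge\omega^{n-k}\ge c_0\,\omega^{n-1}$. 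Your discrete packaging avoids the auxiliary $t$-integral and is marginally cleaner, but the two arguments coincide in substance.
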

\begin{proof}

 It is easy to see that we have the following pointwise equality
\begin{equation}
\label{hessian-lemma-estimate-1}
	\chi^k_u \wedge \omega^{n - k} - \chi^k \wedge \omega^{n - k} = k \int^1_0 \sqrt{-1} \p\bpartial u \wedge \chi^{k - 1}_{t u} \wedge \omega^{n - k} dt .
\end{equation}
By the concavity of $S_k$, we know that if both $\chi$ and $\chi_u$ are in $\Gamma^k_\omega$, so is $\chi_{t u}$.

We compute directly,
\begin{equation}
\label{hessian-lemma-estimate-2}
\begin{aligned}
	C \int_M e^{- p u} \omega^n &\geq \int_M e^{- p u} (\chi^k_u \wedge \omega^{n - k} - \chi^k \wedge \omega^{n - k}) \\
%	&= k \int^1_0 \left(\int_M e^{- p u} \sqrt{-1} \p\bpartial u \wedge \chi^{k - 1}_{t u} \wedge \omega^{n - k}\right) dt \\
%	&= k p \int^1_0 \left(\int_M e^{- p u} \sqrt{-1} \p u \wedge \bpartial u \wedge \chi^{k - 1}_{t u} \wedge \omega^{n - k}\right) dt \\
	&= k p \int_M e^{- p u} \left(\int^1_0 \sqrt{-1} \p u\wedge \bpartial u \wedge \chi^{k - 1}_{t u} \wedge \omega^{n - k} dt\right).
\end{aligned}
\end{equation}
By the concavity, we have the following elementary pointwise inequality for $ 1\leq i \leq n$
\begin{equation}
\label{hessian-lemma-estimate-3}
	S^{\frac{1}{k - 1}}_{k - 1;i} (\chi_{t u}) \geq (1 - t) S^{\frac{1}{k - 1}}_{k - 1;i} (\chi) + t S^{\frac{1}{k - 1}}_{k - 1;i}  (\chi_u), 
\end{equation}
and hence
\begin{equation}
\label{hessian-lemma-estimate-4}
	\sqrt{-1} \p u \wedge \bpartial u \wedge \chi^{k - 1}_{t u} \wedge \omega^{n - k} \geq (1 - t)^{k - 1} \sqrt{-1} \p u \wedge \bpartial u \wedge \chi^{k - 1} \wedge \omega^{n - k} .
\end{equation}
Therefore
\begin{equation}
\label{hessian-lemma-estimate-5}
\begin{aligned}
	C \int_M e^{- p u} \omega^n &\geq k p \int_M e^{- p u} \left(\int^1_0 (1 - t)^{k - 1} \sqrt{-1} \p u \wedge \bpartial u \wedge \chi^{k - 1} \wedge \omega^{n - k}\right) \\
	&\geq p \int_M e^{- p u} \sqrt{-1} \p u \wedge \bpartial u \wedge \chi^{k - 1} \wedge \omega^{n - k} \\
	&= \frac{4 c_0}{n p} \int_M |\p e^{-\frac{p}{2} u} |^2_g \omega^n 
\end{aligned}
\end{equation}
for some uniform constant $c_0 > 0$.
\end{proof}

Albeit the complex Hessian equations are particular cases of quotient equations, the zero estimate is much easier because of the implicit strong cone condition. For complex quotient equations, we can explicitly see the role of cone condition. Nevertheless, we remark that the $L^\infty$ bound on the solution to Hessian equations depends on $\sup_M \psi$ while that on the solution to quotient equations depends on the derivatives of $\psi$ up to second order.

\begin{lemma}
\label{quotient-lemma-estimate}
Let $u$ be a smooth admissible solution of $(k,l)$-quotient equation~\eqref{quotient-equation}. Then there are uniform constants $C$, $p_0$ such that for all $p \geq p_0$, inequality~\eqref{ma2_2_main_inequality} holds true.
\end{lemma}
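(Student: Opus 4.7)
The plan is to adapt the argument of Lemma~\ref{hessian-lemma-estimate} with two modifications: one to exploit the cone hypothesis, and one to handle the factor $\psi$ multiplying the $l$-power in \eqref{quotient-equation}. First, using $[\chi]\in\mathscr{C}_{k,l}(\psi)$, I pick $\chi'=\chi+\sqrt{-1}\p\bpartial v\in[\chi]\cap\Gamma^k_\omega$ realizing the strict inequality of \eqref{cone-condition}, and replace $(\chi,u)$ by $(\chi',u-v)$; by compactness there is a uniform $\delta_0>0$ with
\begin{equation*}
B:=k\chi^{k-1}\wedge\omega^{n-k}-l\psi\chi^{l-1}\wedge\omega^{n-l}\geq\delta_0\,\omega^{n-1}
\end{equation*}
as $(n-1,n-1)$-forms on $M$. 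Combining \eqref{hessian-lemma-estimate-1} for exponents $k$ and $l$ and using \eqref{quotient-equation} to cancel the top-order terms then yields the pointwise identity
\begin{equation*}
\sqrt{-1}\p\bpartial u\wedge\int_0^1 B_t\,dt=\psi\chi^l\wedge\omega^{n-l}-\chi^k\wedge\omega^{n-k},\quad B_t:=k\chi_{tu}^{k-1}\wedge\omega^{n-k}-l\psi\chi_{tu}^{l-1}\wedge\omega^{n-l},
\end{equation*}
whose right-hand side is a bounded $(n,n)$-form while the integrand at $t=0$ recovers $B$.

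Next I multiply by $e^{-pu}$ and integrate by parts, exploiting the K\"ahler condition. Because $\omega$ and $\chi$ are closed, $\chi_{tu}^{m-1}\wedge\omega^{n-m}$ is closed for each $m$, so the only obstruction to a direct analogue of \eqref{hessian-lemma-estimate-5} is the non-closed factor $\psi$ inside $B_t$, which contributes an additional $\sqrt{-1}\bpartial u\wedge\p\psi$ error:
\begin{equation*}
p\int_0^1\!\!\int_M e^{-pu}\sqrt{-1}\p u\wedge\bpartial u\wedge B_t\,dt\leq C\int_M e^{-pu}\omega^n+l\int_0^1\left|\int_M e^{-pu}\sqrt{-1}\bpartial u\wedge\p\psi\wedge\chi_{tu}^{l-1}\wedge\omega^{n-l}\right|dt.
\end{equation*}
The $\p\psi$-error is then tamed by Cauchy--Schwarz on the positive form $\chi_{tu}^{l-1}\wedge\omega^{n-l}$, with weight chosen so that the resulting $\sqrt{-1}\p u\wedge\bpartial u$ portion is absorbed into the $-l\psi\chi_{tu}^{l-1}\wedge\omega^{n-l}$ piece of $pB_t$ on the left. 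The leftover $|\p\psi|^2_g$ contribution requires one further integration by parts to surface $\sqrt{-1}\p\bpartial\psi$, which accounts for the dependence of the $L^\infty$ bound on two derivatives of $\psi$ noted in the paragraph preceding the lemma.

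What remains is the pointwise concavity-type inequality $\sqrt{-1}\p u\wedge\bpartial u\wedge B_t\geq(1-t)^m\sqrt{-1}\p u\wedge\bpartial u\wedge B$ for some $m>0$; once established, integrating in $t$ against $\int_0^1(1-t)^m dt=1/(m+1)$ and invoking $B\geq\delta_0\omega^{n-1}$ upgrades the previous display to $p\int_M e^{-pu}|\p u|_g^2\omega^n\leq C\int_M e^{-pu}\omega^n$, i.e. \eqref{ma2_2_main_inequality}. For the positive piece $+k\chi_{tu}^{k-1}\wedge\omega^{n-k}$ of $B_t$ this is exactly \eqref{hessian-lemma-estimate-3}--\eqref{hessian-lemma-estimate-4} via concavity of $\sigma_{k-1}^{1/(k-1)}$ on $\Gamma^{k-1}$. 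The main obstacle is the signed term $-l\psi\chi_{tu}^{l-1}\wedge\omega^{n-l}$, which would naively demand an upper bound on $\chi_{tu}^{l-1}\wedge\omega^{n-l}$ along the segment from $\chi$ to $\chi_u$ that plain concavity does not supply; the crucial step is to bundle the $k$- and $l$-contributions of $B_t$ into a single concavity statement---for instance via the concavity of $(\sigma_k/\sigma_l)^{1/(k-l)}$ on $\Gamma^k_\omega$---thereby propagating the strict cone inequality $B>0$ uniformly from $t=0$ to all $t\in[0,1]$.
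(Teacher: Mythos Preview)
Your overall architecture matches the paper's: form the integral of $e^{-pu}$ against $(\chi_u^k\wedge\omega^{n-k}-\chi^k\wedge\omega^{n-k})-\psi(\chi_u^l\wedge\omega^{n-l}-\chi^l\wedge\omega^{n-l})$, bound it above trivially, bound it below after integration by parts by $p\int_0^1\int_M e^{-pu}\sqrt{-1}\p u\wedge\bpartial u\wedge B_t\,dt$ minus a $\psi$-error, and then use concavity of $(\sigma_k/\sigma_l)^{1/(k-l)}$ to make $B_t$ uniformly positive along the segment. That much is correct and is essentially what the paper does in \eqref{quotient-lemma-estimate-4}--\eqref{quotient-lemma-estimate-6} and \eqref{quotient-lemma-estimate-left-1}--\eqref{quotient-lemma-estimate-left-3}.

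The genuine gap is your treatment of the $\psi$-error. Whether you apply Cauchy--Schwarz on the positive $(n-1,n-1)$-form $\chi_{tu}^{l-1}\wedge\omega^{n-l}$ or integrate by parts once more to produce $\sqrt{-1}\p\bpartial\psi$, what survives is a term of the shape
\[
\frac{C}{p}\int_0^1\!\!\int_M e^{-pu}\,\chi_{tu}^{l-1}\wedge\omega^{n-l+1}\,dt,
\]
because $\sqrt{-1}\p\psi\wedge\bpartial\psi$ and $\sqrt{-1}\p\bpartial\psi$ are each only bounded by $C\omega$. This integral still carries the \emph{solution-dependent} factor $\chi_{tu}^{l-1}$ and is not controlled by $\int_M e^{-pu}\omega^n$; your sentence ``one further integration by parts to surface $\sqrt{-1}\p\bpartial\psi$'' does not close the loop, and there is nothing to absorb it into on the right. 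The paper devotes most of its argument to exactly this point: it rescales in $t$ to reduce to $t\in[0,\tfrac12]$ (inequality \eqref{quotient-lemma-estimate-9}), uses the strict lower bound $B_t>c_2\,\chi_{tu}^{l-1}\wedge\omega^{n-l}$ valid on that half-interval (inequality \eqref{quotient-lemma-estimate-left-4}), rewrites the error via one more integration by parts as a $\sqrt{-1}\p u\wedge\bpartial u\wedge\chi_{tu}^{l-2}\wedge\omega^{n-l+1}$ integral (inequality \eqref{quotient-lemma-estimate-right-1}), and finally trades the extra $\omega$ back for a $\chi_{tu}$ via G\r{a}rding's inequality and an integration by parts in the $t$-variable (inequalities \eqref{quotient-lemma-estimate-right-2}--\eqref{quotient-lemma-estimate-right-3}). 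Only after this chain can the $u$-dependent error be reabsorbed into the left-hand side for $p$ large. Your proposal is missing this entire mechanism.
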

\begin{proof}
Without loss of generality, we can assume
\begin{equation}
\label{quotient-lemma-estimate-1}
	k \chi^{k - 1} \wedge \omega^{n - k} > l \psi \chi^{l - 1} \wedge \omega^{n - l}.
\end{equation}
Also, by the monotony of $S_k/S_l$, we have
\begin{equation}
\label{quotient-lemma-estimate-2}
	k \chi^{k - 1}_u \wedge \omega^{n - k} > l \psi \chi^{l - 1}_u \wedge \omega^{n - l} .
\end{equation}

We consider
\begin{equation}
\label{quotient-lemma-estimate-3}
	\int_M e^{- p u} \left((\chi^k_u \wedge \omega^{n - k} - \chi^k \wedge \omega^{n - k} ) - \psi (\chi^l_u \wedge \omega^{n - l} - \chi^l \wedge \omega^{n - l})\right) .
\end{equation}
On one hand,
\begin{equation}
\label{quotient-lemma-estimate-4}
\begin{aligned}
	&\, \int_M e^{- p u} \left((\chi^k_u \wedge \omega^{n - k} - \chi^k \wedge \omega^{n - k} ) - \psi (\chi^l_u \wedge \omega^{n - l} - \chi^l \wedge \omega^{n - l})\right) \\
	=&\, \int_M e^{- p u} \left(\frac{\chi^k_u \wedge \omega^{n - k}}{\chi^l_u \wedge \omega^{n - l}} - \frac{\chi^k \wedge\omega^{n - k}}{\chi^l \wedge \omega^{n - l}}\right) \chi^l \wedge \omega^{n - l} \\
	\leq&\, C \int_M e^{- p u} \chi^l \wedge \omega^{n - l} .
\end{aligned}
\end{equation}
On the other hand, we have the pointwise equality
\begin{equation}
\label{quotient-lemma-estimate-5}
\begin{aligned}
	&\, (\chi^k_u \wedge \omega^{n - k} - \chi^k \wedge \omega^{n - k} ) - \psi (\chi^l_u \wedge \omega^{n - l} - \chi^l \wedge \omega^{n - l}) \\
	=& \int^1_0 \sqrt{-1} \p\bpartial u \wedge \left(k \chi^{k - 1}_{t u} \wedge \omega^{n - k} - l \psi \chi^{l - 1}_{t u} \wedge \omega^{n - l}\right) dt 
\end{aligned}
\end{equation}
and hence
\begin{equation}
\label{quotient-lemma-estimate-6}
\begin{aligned}
	&\, \int_M e^{- p u} \left((\chi^k_u \wedge \omega^{n - k} - \chi^k \wedge \omega^{n - k} ) - \psi (\chi^l_u \wedge \omega^{n - l} - \chi^l \wedge \omega^{n - l})\right) \\
	=&\, p \int^1_0 \left(\int_M e^{- p u} \sqrt{-1} \p u \wedge \bpartial u \wedge \left(k \chi^{k - 1}_{t u} \wedge \omega^{n - k} - l \psi \chi^{l - 1}_{t u} \wedge \omega^{n - l}\right) \right) dt \\
	&\, - \frac{l}{p} \int^1_0 \left(\int_M e^{- p u} \sqrt{-1} \bpartial\p \psi \wedge \chi^{l - 1}_{t u} \wedge \omega^{n - l} \right) dt \\
	\geq&\, p \int^1_0 \left(\int_M e^{- p u} \sqrt{-1} \p u \wedge \bpartial u \wedge \left(k \chi^{k - 1}_{t u} \wedge \omega^{n - k} - l \psi \chi^{l - 1}_{t u} \wedge \omega^{n - l}\right) \right) dt \\
	&\, - \frac{C}{p} \int^1_0 \left(\int_M e^{- p u} \chi^{l - 1}_{t u} \wedge \omega^{n - l + 1} \right) dt .
\end{aligned}
\end{equation}

By the concavity of hyperbolic polynomials, for $0 < \zeta < 1$, 
\begin{equation}
\label{quotient-lemma-estimate-7}
	\frac{1}{\zeta} S^{\frac{1}{m}}_m(\chi_{\zeta t u}) + \left(1 - \frac{1}{\zeta}\right) S^{\frac{1}{m}}_m (\chi) \geq S^{\frac{1}{m}}_m (\chi_{tu})
\end{equation}
and thus
\begin{equation}
\label{quotient-lemma-estimate-8}
	S_m (\chi_{\zeta t u}) \geq \zeta^m S_m (\chi_{t u}) .
\end{equation}
This implies that,
\begin{equation}
\label{quotient-lemma-estimate-9}
\begin{aligned}
	&\, \int^1_0 \left( \int_M e^{- p u} \chi^{l - 1}_{t u} \wedge \omega^{n - l + 1}\right) dt \\
	\leq&\, 2^{l - 1} \int^1_0 \left( \int_M e^{- p u} \chi^{l - 1}_{\frac{t u}{2}} \wedge \omega^{n - l + 1}\right) dt \\
	\leq&\, 2^l \int^{\frac{1}{2}}_0 \left(\int_M e^{- p u} \chi^{l - 1}_{t u} \wedge \omega^{n - l + 1}\right) dt .
\end{aligned}
\end{equation}
Then we obtain, from \eqref{quotient-lemma-estimate-4}, \eqref{quotient-lemma-estimate-6} and \eqref{quotient-lemma-estimate-9},
\begin{equation}
\label{quotient-lemma-estimate-10}
\begin{aligned}
	 &\, p \int^1_0 \left(\int_M e^{- p u} \sqrt{-1} \p u \wedge \bpartial u \wedge \left(k \chi^{k - 1}_{t u} \wedge \omega^{n - k} - l \psi \chi^{l - 1}_{t u} \wedge \omega^{n - l}\right) \right) dt \\
	\leq&\, \frac{C}{p} \int^{\frac{1}{2}}_0 \left(\int_M e^{- p u} \chi^{l - 1}_{t u} \wedge \omega^{n - l + 1} \right) dt  + C \int_M e^{- p u} \omega^n.
\end{aligned}
\end{equation}

By the concavity of the quotient functions, we have
\begin{equation}
\label{quotient-lemma-estimate-left-1}
	k \chi^{k - 1}_{t u} \wedge \omega^{n - k} - l \psi \chi^{l - 1}_{t u} \wedge \omega^{n - l} > 0 .
\end{equation}
Moreover, for some $\delta > 0$,
\begin{equation}
\label{quotient-lemma-estimate-left-2}
\begin{aligned}
	&\, \sqrt{-1} \p u \wedge \bpartial u \wedge \left(k \chi^{k - 1}_{t u} \wedge \omega^{n - k} - l \psi \chi^{l - 1}_{t u} \wedge \omega^{n - l}\right) \\
	\geq&\, \left(\left( (1 - t) (1 + \delta) + t \right)^{k - l} - 1\right) l \psi \sqrt{-1} \p u \wedge \bpartial u \wedge \chi^{l - 1}_{t u} \wedge \omega^{n - l} \\
	\geq&\, (k - l) (1 - t) \delta l \psi \sqrt{- 1} \p u \wedge \bpartial u \wedge \chi^{l - 1}_{t u} \wedge \omega^{n - l} . 
\end{aligned}
\end{equation}
Applying \eqref{hessian-lemma-estimate-3}, there are uniform constants $c_1 > 0$ and $c_2 > 0$ such that
\begin{equation}
\label{quotient-lemma-estimate-left-3}
\begin{aligned}
	&\, \int^1_0 \left(\int_M e^{- p u} \sqrt{- 1} \p u \wedge \bpartial u \wedge \left(k \chi^{k - 1}_{t u} \wedge \omega^{n - k} - l \psi \chi^{l - 1}_{t u} \wedge \omega^{n - l}\right)\right) dt \\
	\geq&\,  (k - l) \delta l \int^1_0 (1 - t)^l \left(\int_M \psi  e^{- p u} \sqrt{- 1} \p u \wedge \bpartial u \wedge \chi^{l - 1} \wedge \omega^{n - l} \right)dt \\
	\geq&\, c_1 \int_M e^{- p u} \sqrt{-1} \p u \wedge \bpartial u \wedge \chi^{l - 1} \wedge \omega^{n - l} 
\end{aligned}
\end{equation}
and for $0 < t < \frac{1}{2}$
\begin{equation}
\label{quotient-lemma-estimate-left-4}
	k \chi^{k - 1}_{t u} \wedge \omega^{n - k} - l \psi \chi^{l - 1}_{t u} \wedge \omega^{n - l} > c_2 \chi^{l - 1}_{t u} \wedge \omega^{n - l} .
\end{equation}
According to \eqref{quotient-lemma-estimate-left-3}, the left term in \eqref{quotient-lemma-estimate-10} is positive, and we will use part of it to deal with the first term in the right side of \eqref{quotient-lemma-estimate-10}. We compute
\begin{equation}
\label{quotient-lemma-estimate-right-1}
\begin{aligned}
	&\, \frac{C}{p} \int^{\frac{1}{2}}_0 \left(\int_M e^{- p u} \chi^{l - 1}_{t u} \wedge \omega^{n - l + 1}\right) dt \\
	=&\, {C(l - 1)} \int^{\frac{1}{2}}_0 \int^t_0 \left(\int_M e^{- p u} \sqrt{-1} \p u \wedge \bpartial u \wedge \chi^{l - 2}_{s u} \wedge \omega^{n - l + 1}\right) ds\, dt \\
	&\, + \frac{C}{2 p} \int_M e^{- p u} \chi^{l - 1} \wedge \omega^{n - l + 1} \\
	\leq&\, \frac{C(l - 1)}{2} \int^{\frac{1}{2}}_0 \left(\int_M e^{- p u} \sqrt{-1} \p u \wedge \bpartial u \wedge \chi^{l - 2}_{t u} \wedge \omega^{n - l + 1}\right) dt \\
	&\, + \frac{C}{2 p} \int_M e^{- p u} \chi^{l - 1} \wedge \omega^{n - l + 1} .
\end{aligned}
\end{equation}
The second term is acceptable, and we only need to control the first term. Notice that there is a uniform positive constant $\lambda$ such that $\chi - \lambda \omega \in \Gamma^k_\omega$. We compute, using integration by parts and G\r{a}ding's inequality,
\begin{equation}
\label{quotient-lemma-estimate-right-2}
\begin{aligned}
	&\, \int^{\frac{1}{2}}_0 \left(\int_M e^{- p u} \sqrt{-1} \p u \wedge \bpartial u \wedge \chi^{l - 1}_{t u} \wedge \omega^{n - l}\right) dt \\
	\geq&\, \lambda\int^{\frac{1}{2}}_0 \left(\int_M e^{- p u} \sqrt{-1} \p u \wedge \bpartial u \wedge \chi^{l - 2}_{t u} \wedge \omega^{n - l + 1}\right) dt \\
	&\, + \frac{1}{l - 1} \int^{\frac{1}{2}}_0 \left(\int_M e^{- p u} \sqrt{-1} \p u \wedge \bpartial u \wedge t \frac{d}{d t} \chi^{l - 1}_{t u} \wedge \omega^{n - l }\right) dt \\
	\geq&\, \lambda\int^{\frac{1}{2}}_0 \left(\int_M e^{- p u} \sqrt{-1} \p u \wedge \bpartial u \wedge \chi^{l - 2}_{t u} \wedge \omega^{n - l + 1}\right) dt \\
	&\, - \frac{1}{l - 1} \int^{\frac{1}{2}}_0 \left(\int_M e^{- p u} \sqrt{- 1} \p u \wedge \bpartial u \wedge \chi^{l - 1}_{t u} \wedge \omega^{n - l} \right) dt
\end{aligned}
\end{equation}
and consequently
\begin{equation}
\label{quotient-lemma-estimate-right-3}
\begin{aligned}
	&\, \frac{l}{l - 1} \int^{\frac{1}{2}}_0 \left(\int_M e^{- p u} \sqrt{- 1} \p u \wedge\bpartial u \wedge \chi^{l - 1}_{t u} \wedge \omega^{n - l} \right) dt \\
	\geq&\, \lambda \int^{\frac{1}{2}}_0 \left(\int_M e^{- p u} \sqrt{- 1} \p u \wedge \bpartial u \wedge \chi^{l - 2}_{t u} \wedge \omega^{n - l + 1} \right) dt .
\end{aligned}
\end{equation}
Combining \eqref{quotient-lemma-estimate-10}, \eqref{quotient-lemma-estimate-left-3}, \eqref{quotient-lemma-estimate-left-4}, \eqref{quotient-lemma-estimate-right-1} and \eqref{quotient-lemma-estimate-right-3}, we may assume that $p_0 \geq \frac{l C}{c_2}$ and thus for $p \geq p_0$,
\begin{equation}
\begin{aligned}
	\frac{c_1 p \lambda^{l - 1}}{2} \int_M e^{- p u} \sqrt{-1} \p u \wedge \bpartial u \wedge \omega^{n - 1}  \leq C \int_M e^{- p u} \omega^n .
\end{aligned}
\end{equation}

\end{proof}

\bigskip

\section{The estimates for complex Monge-Amp\`ere type equations}
\label{hermitian}
\setcounter{equation}{0}
\medskip

For general Hermitian manifolds, there are troublesome torsion terms which are extremely difficult to deal with. We shall focus on the complex Monge-Amp\`ere type equations, as the positivity of $\Gamma^n_\omega$ and $S_n/S_{n - \a}$ does help to control the torsion terms (see also \cite{TWv10b}, \cite{GSun12}, \cite{Sun2013e}).

The gradient estimate and $C^2$ estimate were obtained by Guan and the author in \cite{GSun12}, while a sharp form of $C^2$ estimate was also achieved by the author in \cite{Sun2013e}. Higher order estimates are guaranteed by Evans-Krylov theory and Schauder estimates, which is standard. Therefore, it suffices to obtain a uniform $L^\infty$ bound of $u$.
\begin{lemma} 
\label{inverse-hessian-lemma-estimate}
Let $u$ be a smooth admissible solution to complex Monge-Amp\`ere type equation~\eqref{ma2-main-equation}. Then there are uniform constants $C$, $p_0$ such that for all $p \geq p_0$ we have inequality~\eqref{ma2_2_main_inequality}.
\end{lemma}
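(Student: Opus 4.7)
The plan is to follow the strategy of Lemma~\ref{quotient-lemma-estimate} (the K\"ahler $(k,l)$-quotient case) specialized to $k = n$, $l = n - \a$, inserting the torsion corrections that appear because $\o$ and $\chi$ need not be closed. Set $\Theta_{tu} := n \chi_{tu}^{n - 1} - (n - \a) \psi \chi_{tu}^{n - \a - 1} \wedge \o^\a$; the cone condition, the monotonicity of $S_n/S_{n-\a}$, and the strict positivity $\chi_u > 0$ together ensure that $\Theta_{tu}$ is a strictly positive $(n - 1, n - 1)$ form for every $t \in [0, 1]$. With this setup, the pointwise identity \eqref{quotient-lemma-estimate-5} and the equation give
\begin{equation*}
\int_M e^{- p u} \int_0^1 \sqrt{-1} \p \bpartial u \wedge \Theta_{tu}\, dt \leq C \int_M e^{- p u} \o^n ,
\end{equation*}
exactly as in \eqref{quotient-lemma-estimate-4}--\eqref{quotient-lemma-estimate-6}.

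The next step is to integrate by parts to convert $\sqrt{-1} \p \bpartial u$ into a gradient square. On a Hermitian manifold the identity reads
\begin{equation*}
\int_M e^{- p u} \sqrt{-1} \p \bpartial u \wedge \Theta_{tu} = p \int_M e^{- p u} \sqrt{-1} \p u \wedge \bpartial u \wedge \Theta_{tu} + \int_M e^{- p u} \sqrt{-1} \bpartial u \wedge \p \Theta_{tu} ,
\end{equation*}
and the second term, absent in the K\"ahler setting, is the main new difficulty. Expanding $\p \Theta_{tu}$ produces terms of three types: (a) a $\p \psi$ term, (b) $\p \chi$-torsion terms wedged with $\chi_{tu}^{n - 2}$ and with $\chi_{tu}^{n - \a - 2} \wedge \o^\a$, and (c) $\p \o$-torsion terms wedged with $\chi_{tu}^{n - \a - 1} \wedge \o^{\a - 1}$. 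I would control the resulting error by Cauchy--Schwarz against the strictly positive gradient density $\sqrt{-1} \p u \wedge \bpartial u \wedge \Theta_{tu}$: for any $\e > 0$,
\begin{equation*}
\left| \int_M e^{- p u} \sqrt{-1} \bpartial u \wedge \p \Theta_{tu} \right| \leq \e p \int_M e^{- p u} \sqrt{-1} \p u \wedge \bpartial u \wedge \Theta_{tu} + \frac{C_\e}{p} \int_M e^{- p u} \chi_{tu}^{n - \a - 1} \wedge \o^{\a + 1} ;
\end{equation*}
the first piece is absorbed into the positive main term for $\e$ small, and the second merges into the $O(1/p)$ remainder already present in~\eqref{quotient-lemma-estimate-10}.

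Once the torsion is absorbed, the remainder of the proof runs in parallel with Lemma~\ref{quotient-lemma-estimate}: the concavity bounds \eqref{quotient-lemma-estimate-left-3}--\eqref{quotient-lemma-estimate-left-4} give a positive lower bound for the main term; the bootstrapping inequalities \eqref{quotient-lemma-estimate-right-1}--\eqref{quotient-lemma-estimate-right-3} reduce the remainder to a gradient integral against $\o^{n - 1}$ via G\r{a}rding's inequality (using $\l > 0$ with $\chi - \l \o \in \Gamma^n_\o$), and each of these bootstrap integrations by parts receives the same torsion treatment as above. The outcome is $c\, p \int_M e^{- p u} \sqrt{-1} \p u \wedge \bpartial u \wedge \o^{n - 1} \leq C \int_M e^{- p u} \o^n$, i.e.~\eqref{ma2_2_main_inequality}. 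The main obstacle is precisely the torsion estimate: keeping the Cauchy--Schwarz constants under control across the iterated bootstrap steps requires the uniform strict positivity of $\Theta_{tu}$, and this is where the strict condition $\chi_u > 0$ of the Monge--Amp\`ere type equation (rather than merely $\chi_u \in \Gamma^n_\o$) is genuinely used.
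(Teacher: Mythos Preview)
Your overall plan---follow Lemma~\ref{quotient-lemma-estimate} with $k=n$, $l=n-\a$ and insert torsion corrections---is exactly the paper's strategy, and your treatment of the main positive term via \eqref{quotient-lemma-estimate-left-3}--\eqref{quotient-lemma-estimate-left-4} and the bootstrap via \eqref{quotient-lemma-estimate-right-1}--\eqref{quotient-lemma-estimate-right-3} is fine. The gap is in the torsion step.

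After one integration by parts you face $\int_M e^{-pu}\sqrt{-1}\,\bpartial u\wedge\p\Theta_{tu}$, whose dominant contribution comes from $\p(\chi_{tu}^{n-1})=(n-1)\chi_{tu}^{n-2}\wedge\p\chi$. If you run Cauchy--Schwarz against $\sqrt{-1}\,\p u\wedge\bpartial u\wedge\Theta_{tu}$ pointwise, diagonalize $\chi_{tu}$ with eigenvalues $\mu_1,\dots,\mu_n$ and note $\Theta_{tu}\leq n\chi_{tu}^{n-1}$; a typical term is $u_{\bar a}\,T_{ab\bar b}\prod_{m\neq a,b}\mu_m$, and Young's inequality gives a remainder proportional to $\dfrac{(\prod_{m\neq a,b}\mu_m)^2}{\prod_{m\neq a}\mu_m}=\dfrac{\prod_{m\neq a,b}\mu_m}{\mu_b}$. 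This carries an uncontrolled factor $\mu_b^{-1}$: at this stage you have no lower bound on the eigenvalues of $\chi_{tu}$ (the $C^2$ estimate depends on the $L^\infty$ bound you are proving). So the claimed remainder $\frac{C_\e}{p}\int e^{-pu}\chi_{tu}^{n-\a-1}\wedge\o^{\a+1}$ is not what Cauchy--Schwarz actually produces for the type~(b) term, and the absorption fails.

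The paper sidesteps this by integrating by parts a \emph{second} time: write $e^{-pu}\bpartial u=-\frac{1}{p}\bpartial(e^{-pu})$ and apply Stokes once more to obtain
\[
-\int_M e^{-pu}\sqrt{-1}\,\bpartial u\wedge\p\Theta_{tu}
=-\frac{1}{p}\int_M e^{-pu}\sqrt{-1}\,\p\bpartial\Theta_{tu}.
\]
Now the torsion term has no $\bpartial u$ at all, already carries the factor $1/p$, and $\p\bpartial\Theta_{tu}$ is a genuine $(n,n)$-form that expands into bounded torsion data wedged with $\chi_{tu}^{n-2}$, $\chi_{tu}^{n-3}$, $\chi_{tu}^{n-\a-1}$, $\chi_{tu}^{n-\a-2}$, $\chi_{tu}^{n-\a-3}$; each piece is bounded pointwise by $C\,\chi_{tu}^{m}\wedge\o^{n-m}$ for the corresponding $m$. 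These are positive because $\chi_{tu}>0$, and the index-lowering inequality \eqref{metric-index-inequality-1} reduces them all to a single $\frac{C}{p}\int_0^{1/2}\int_M e^{-pu}\chi_{tu}^{n-2}\wedge\o^2$, which is then handled by the bootstrap you already outlined (with the same second integration by parts at each step). Replace your Cauchy--Schwarz paragraph with this double integration by parts and the rest of your argument goes through.

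One small side remark: $\chi_u\in\Gamma^n_\o$ and $\chi_u>0$ are the same condition, so your final sentence does not isolate an extra hypothesis.
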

\begin{proof}

We follow the proof of Lemma~\ref{quotient-lemma-estimate} with some modification to deal with the torsion terms. Without loss of generality, we may assume
\begin{equation}
\label{initial-cone-condition}
	n \chi^{n - 1} > (n - \a) \psi \chi^{n - \a - 1} \wedge \omega^\a ,
\end{equation}
and there exist uniform positive constants $\lambda$ and $\Lambda$.
\begin{equation}
\label{uniform-positivity}
	\lambda \omega \leq \chi \leq \Lambda \omega .
\end{equation}
Also, by the monotony of $S_n/S_{n - \a}$, we have
\begin{equation}
\label{final-cone-condition}
n \chi^{n - 1}_u > (n - \a) \psi \chi^{n - \a - 1}_u \wedge \omega^\a .
\end{equation}

As in the proof of Lemma~\ref{quotient-lemma-estimate}, we study the integral
\begin{equation}
\label{target-integral}
	I : = \int_M e^{- p u} \left((\chi^n_u - \chi^n) - \psi (\chi^{n - \a}_u \wedge \omega^\a - \chi^{n - \a} \wedge \omega^\a)\right) .
\end{equation}
On one hand,
\begin{equation}
\label{integral-inequality-less}
	I = \int_M e^{- p u} \left(\frac{\chi^n_u}{\chi^{n - \a}_u \wedge \omega^\a} - \frac{\chi^n}{\chi^{n - \a} \wedge \omega^\a}\right) \chi^{n - \a} \wedge \omega^\a \leq C \int_M e^{- p u} \omega^n .
\end{equation}
On the other hand, we have
\begin{equation}
\label{integral-inequality-greater}
\begin{aligned}
	I &= p \int^1_0 \left( \int_M e^{- p u} \sqrt{- 1} \p u \wedge \bpartial u \wedge  \left(n \chi^{n - 1}_{t u} - (n - \a) \psi \chi^{n - \a - 1}_{t u} \wedge \omega^\a \right)\right) dt \\
	&\quad - \frac{1}{p} \int^1_0 \left(\int_M \sqrt{- 1} e^{- p u} \p\bpartial \left(n \chi^{n - 1}_{t u} - (n - \a) \psi \chi^{n - \a - 1}_{t u} \wedge \omega^\a \right)\right) dt \\
	&\geq p \int^1_0 \left( \int_M e^{- p u} \sqrt{- 1} \p u \wedge \bpartial u \wedge  \left(n \chi^{n - 1}_{t u} - (n - \a) \psi \chi^{n - \a - 1}_{t u} \wedge \omega^\a \right)\right) dt \\
	&\quad - \frac{C}{p} \int^1_0 \left(\int_M e^{- p u} \left( \chi^{n - 2}_{t u} \wedge \omega^2 + \chi^{n - 3}_{t u}\wedge\omega^3 + \chi^{n - \a - 1}_{t u} \wedge \omega^{\a + 1} \right)\right) dt \\
	&\quad - \frac{C}{p} \int^1_0 \left(\int_M e^{- p u} \left(  \chi^{n - \a - 2}_{t u}  \wedge \omega^{\a + 2} + \chi^{n - \a - 3}_{t u} \wedge \omega^{\a + 3} \right)\right) dt ,
\end{aligned}
\end{equation}
where the inequality uses the uniform bounds in \eqref{uniform-positivity}.

Using integration by parts and G\r{a}ding's inequality, for $n \geq k \geq 1$,
\begin{equation}
\label{metric-index-inequality}
\begin{aligned}
	\int^1_0 \chi^k_{t u} \wedge \omega^{n - k} dt	&\geq \lambda \int^1_0 \chi^{k - 1}_{t u} \wedge \omega^{n - k + 1} dt + \frac{1}{k} \int^1_0 t \frac{d}{d t} (\chi^{k}_{t u} \wedge \omega^{n - k}) dt \\
	&= \lambda \int^1_0 \chi^{k - 1}_{t u} \wedge \omega^{n - k + 1} dt+ \frac{1}{k} \chi^k_u \wedge \omega^{n - k} - \frac{1}{k} \int^1_0 \chi^k_{t u} \wedge \omega^{n - k} dt
\end{aligned}
\end{equation}
and hence
\begin{equation}
\label{metric-index-inequality-1}
	\frac{k + 1}{k} \int^1_0 \chi^k_{t u} \wedge \omega^{n - k} dt > \lambda \int^1_0 \chi^{k - 1}_{t u} \wedge \omega^{n - k + 1} dt .
\end{equation}
Combining   \eqref{integral-inequality-less}, \eqref{integral-inequality-greater}, \eqref{metric-index-inequality-1} and \eqref{quotient-lemma-estimate-8}, 
\begin{equation}
\label{integral-inequality}
\begin{aligned}
	& p \int^1_0 \left( \int_M e^{- p u} \sqrt{- 1} \p u \wedge \bpartial u \wedge  \left(n \chi^{n - 1}_{t u} - (n - \a) \psi \chi^{n - \a - 1}_{t u} \wedge \omega^\a \right)\right) dt \\
	&\qquad \leq  C \int_M e^{- p u} \omega^n +  \frac{C}{p} \int^\frac{1}{2}_0 \left(\int_M e^{- p u} \chi^{n - 2}_{t u} \wedge \omega^2 \right) dt .
\end{aligned}
\end{equation}

By the concavity of the quotient functions, for some $\delta > 0$,
\begin{equation}
\label{quotient-inequality-2}
	n \chi^{n - 1}_{t u} - (n - \a) \psi \chi^{n - \a - 1}_{t u} \wedge \omega^\a > n \left(1 - \frac{1}{\left(1 + \delta - \delta t\right)^\a}\right) \chi^{n - 1}_{t u}.
\end{equation}
Consequently, for some uniform constants $c_1 > 0$, 
\begin{equation}
\label{metric-inequality-2}
\begin{aligned}
	&\,  \int^1_0 \left( \int_M e^{- p u} \sqrt{- 1} \p u \wedge \bpartial u \wedge  \left(n \chi^{n - 1}_{t u} - (n - \a) \psi \chi^{n - \a - 1}_{t u} \wedge \omega^\a \right)\right) dt \\
	\geq&\, c_1 \int_M e^{- p u} \sqrt{- 1} \p u \wedge \bpartial u \wedge \omega^{n - 1},
\end{aligned}
\end{equation}
and for some uniform constant $c_2 > 0$ and $0 \leq t \leq \frac{1}{2}$
\begin{equation}
\label{metric-inequality-3}
	n \chi^{n - 1}_{t u} - (n - \a) \psi \chi^{n - \a - 1}_{t u} \wedge \omega^\a > c_2 \chi^{n - 1}_{t u}.
\end{equation}
Applying \eqref{metric-index-inequality-1} and assuming that $p$ is large enough,
\begin{equation}
\label{metric-inequality-4}
\begin{aligned}
	&\quad\; \frac{1}{p} \int^\frac{1}{2}_0 \left(\int_M e^{- p u} \chi^{n - 2}_{t u} \wedge \omega^2 \right) dt \\
	&= \frac{(n - 2)}{p} \int^\frac{1}{2}_0 \int^t_0 \left(\int_M e^{- p u} \sqrt{- 1} \p\bpartial u \wedge \chi^{n - 3}_{s u} \wedge \omega^2 \right)ds dt + \frac{1}{2 p} \int_M e^{- p u} \chi^{n - 2} \wedge \omega^2 \\
	&= (n - 2) \int^\frac{1}{2}_0 \int^t_0 \left(\int_M e^{- p u} \sqrt{- 1} \p u \wedge \bpartial u \wedge \chi^{n - 3}_{s u} \wedge \omega^2\right)ds dt \\
	&\quad + \frac{(n - 2)}{p^2} \int^\frac{1}{2}_0 \int^t_0 \left(\int_M e^{- p u}  \sqrt{- 1} \p \bpartial (\chi^{n - 3}_{s u} \wedge \omega^2)\right)ds dt + \frac{1}{2 p} \int_M e^{- p u} \chi^{n - 2} \wedge \omega^2 \\
	&\leq \frac{(n - 2)}{2} \int^\frac{1}{2}_0 \left(\int_M e^{- p u} \sqrt{- 1} \p u \wedge \bpartial u \wedge \chi^{n - 3}_{t u} \wedge \omega^2\right) dt \\
	&\quad + \frac{1}{2 p} \int^\frac{1}{2}_0 \left(\int_M e^{- p u}  \chi^{n - 2}_{t u} \wedge \omega^2 \right) dt  + \frac{1}{2 p} \int_M e^{- p u} \chi^{n - 2} \wedge \omega^2,
\end{aligned}
\end{equation}
and thus
\begin{equation}
\label{metric-inequality-5}
\begin{aligned}
	&\qquad\frac{1}{ p} \int^\frac{1}{2}_0 \left(\int_M e^{- p u}  \chi^{n - 2}_{t u} \wedge \omega^2 \right) dt   \\
	&\leq (n - 2) \int^\frac{1}{2}_0 \left(\int_M e^{- p u} \sqrt{- 1} \p u \wedge \bpartial u \wedge \chi^{n - 3}_{t u} \wedge \omega^2\right) dt  + \frac{1}{p} \int_M e^{- p u} \chi^{n - 2} \wedge \omega^2 .
\end{aligned}
\end{equation}

Combining \eqref{integral-inequality}, \eqref{metric-inequality-2}, \eqref{metric-inequality-3}, \eqref{metric-inequality-5} and \eqref{quotient-lemma-estimate-right-3}, there are uniform constants C and $p_0$  such that for $p \geq p_0$, 
\begin{equation}
	p \int_M e^{- p u} \sqrt{-1} \p u \wedge \bpartial u \wedge \omega^{n - 1} \leq C \int_M e^{- p u} \omega^n .
\end{equation}

\end{proof}

\bigskip
\noindent
{\bf Acknowledgements}\quad   
The author is very grateful to Bo Guan for his support and encouragement. The author also wishes to thank Valentino Tosatti and Ben Weinkove for some helpful discussions.

\end{document}